\def\NZQ{\mathbb}
\def\ZZ{{\NZQ Z}}
\theoremstyle{plain}
\newtheorem{theorem}{Theorem}[section]
\newtheorem{lemma}[theorem]{Lemma}
\newtheorem{proposition}[theorem]{Proposition}
\newtheorem{corollary}[theorem]{Corollary}
\newtheorem*{open}{Open question}
\theoremstyle{definition}
\newtheorem{example}{Example}
\theoremstyle{remark}
\newtheorem{remark}[theorem]{Remark}
\def\opn#1#2{\def#1{\operatorname{#2}}} % to make operators
\opn\height{height}
\newcommand{\CC}{\mathcal{C}}
\newcommand*\rot{\rotatebox{90}}
\newcommand\blfootnote[1]{%
  \begingroup
  \renewcommand\thefootnote{}\footnote{#1}%
  \addtocounter{footnote}{-1}%
  \endgroup
}
\begin{document}

%\begin{frontmatter}

\title{Cohen--Macaulay generalized binomial edge ideals}

\author{Luca Amata, Marilena Crupi and Giancarlo Rinaldo}

\newcommand{\Addresses}{
{\footnotesize
\begin{center}
\textsc{Department of Mathematics and Computer Sciences,\\
Physics and Earth Sciences, University of Messina,\\
Viale Ferdinando Stagno d'Alcontres 31, 98166 Messina, Italy}

\textit{E-mail addresses}: \texttt{luca.amata.77@gmail.com};  \texttt{mcrupi@unime.it}; \texttt{giancarlo.rinaldo@unime.it}
\end{center}
}}
\date{}
\maketitle
\Addresses

\begin{abstract}
Let $G$ be a simple graph on $n$ vertices and let $J_{G,m}$ be the generalized binomial edge ideal associated to $G$ in the polynomial ring $K[x_{ij}, 1\le i \le m, 1\le j \le n]$.
We classify the Cohen--Macaulay generalized binomial edge ideals. Moreover, we study the unmixedness and classify the bipartite  and power cycle unmixed ones.

\blfootnote{
\hspace{-0,3cm} \emph{Keywords:} Generalized binomial edge ideals, Circulant graphs, Dual graph, Cohen--Macaulayness, Unmixedness.

\emph{2020 Mathematics Subject Classification:} 05C40, 05E40, 13C13, 68W30.

* \emph{Corresponding author: Giancarlo Rinaldo; email: giancarlo.rinaldo@unime.it.}
}
\end{abstract}

%\end{frontmatter}

\section{Introduction}\label{sec:1}
Let $R=K[x_{ij}, 1\le i \le m, 1\le j \le n]$ be the polynomial ring in $m\times n$ variables with coefficient in a field $K$ with the standard grading, \emph{i.e.}, $\deg(x_{ij})=1$. Let $G$ be a simple graph with $n$ vertices. In \cite{JR}, Rauh introduced the notion of generalized binomial edge ideal $J_{G, m}$ of $R$ associated to $G$ for studying conditional independence ideals. 
Such an ideal generalizes the binomial edge ideal introduced in \cite{HHHKR, MO}. Indeed, for $m=2$, $J_{G, m}=J_G$ is the classical binomial edge ideal of the graph $G$. Afterwards, in \cite{MK2,EHHQ}, the authors introduced the binomial edge ideal of a pair of graphs which is a generalization of both the generalized binomial edge ideals and the ideals generated by adjacent minors. More in details, if $G_1$ and $G_2$ are simple graphs with $m$ and $n$ vertices, respectively, and $f=\{i, j\}\in E(G_1)$, $g=\{k, \ell\}\in E(G_2)$, one can assign the $2$--minor $p_{f,g} =  x_{ik}x_{j\ell} - x_{i\ell}x_{jk}$ to the pair $(f, g)$. The binomial edge ideal of the pair $(G_1, G_2)$ is the ideal $J_{G_1, G_2} =(p_{f,g} : (f, g)\in E(G_1)\times E(G_2))$. One can observe that if $(G_1, G_2) =(K_m, G)$, where $K_m$ is the complete graph on $m$ vertices, $J_{K_m, G}$ is the generalized binomial edge ideal associate to $G$. In \cite{EHHQ} and \cite{CI}, unmixedness and Cohen–Macaulayness of binomial edge ideal of a pair of graphs are characterized in some special cases.\\
The algebraic properties of the class of generalized binomial edge ideals are widely open, although some results are known (see for instance \cite{CI, AK, CMM} and the reference therein). Furthermore, some problems solved for binomial edge ideals (see, for instance, \cite{KS,CR,HR}) could be dealt also for the class of generalized bonimial edge ideals.

It is a widely open problem, studied by many authors, the classification of Cohen--Macaulay binomial edge ideals (see for instance \cite{EHH}). Nevertheless, some recent papers \cite{BN,BMS1,BMS,LMRR,R2} relate the Cohen--Macaulyness to combinatorial properties of the graphs. So, the following question naturally arises:
\begin{open}
Is the Cohen--Macaulyness of (generalized) binomial edge ideals field independent?
\end{open}
In the case $m>2$, we give a positive answer to the question. In particular, we prove that the only Cohen--Macaulay graphs are the complete ones.

Our main tool is the \emph{dual graph} in the sense of  \cite{BV}. Such a notion was firstly introduced by Hartshorne in \cite{RHA}. He proved that the connectedness of the dual graph translates in combinatorial terms the notion of connectedness in codimension one \cite[Proposition 1.1]{RHA}. As a consequence one has that if $I$ is an ideal of a polynomial ring $A=K[x_1, \ldots, x_n]$, then if $A/I$ satisfies the Serre’s condition ($S_2$) or in particular, if $A/I$ is Cohen--Macaulay, then the dual graph $D(I)$ is connected. The converse does not hold in general (\cite[Remark 5.1]{BMS1}).\\
It is well--known that all Cohen--Macaulay ideals are unmixed. We observe that an unmixed generalized binomial ideal has an underlying graph that is $(m-1)$--connected.

A nice family of biconnected graphs are the circulant graphs.
We obtain a characterization of the unmixedness of the generalized binomial edge ideal associated to a non--complete power cycle. The power cycles have been studied in \cite{BH} and \cite{MTW}, and in the latter paper the authors determined a classification of Cohen--Macaulay power cycles edge ideal.

The paper is organized as follows. Section \ref{sec:2} contains some preliminary notions that will be used in the paper. In Section \ref{sec:3}, we analyze the unmixedness of the class of generalized binomial
edge ideal in some special cases.
The notion of cutsets plays a fundamental role.
They are fundamental in deepening the behavior of generalized binomial edge ideals since they are related to their minimal primary decomposition. Our main result is the characterization of the unmixedness of $J_{G, m}$ for $G$ non--complete power cycle (Theorem \ref{thm:main}).
We characterize the bipartite unmixed ones, too. 
We also give a computational classification of all unmixed generalized binomial edge ideals for any $m$ with $n\leq 10$. The implementation is freely downloadable from the website \cite{ACR} (see Table~\ref{tab:un}). 
In Section \ref{sec:4}, we state a characterization of the Cohen–-Macaulayness of the generalized binomial edge ideal $J_{G, m}$ for $m\ge 3$ (Corollary \ref{cor:main}). The key result is Lemma \ref{lem:dualempty}.

\section{Preliminaries}\label{sec:2}
 Let $G$ be a finite simple graph with vertex set $V(G)$, $|V(G)|=n$, and edge set $E(G)$. If $T$ is a subset of vertices of $V(G)$, we denote with $G\setminus T$ the induced subgraph of $G$ on $[n]\setminus T$ \cite{RV}. A graph $G$ is \emph{$k$--vertex--connected} (or simply \emph{$k$--connected}) if $k<n$ and for every subset $T$ of vertices such that $|T|<k$, the induced graph $G\setminus T$ is connected. In particular, a graph is \emph{biconnected} if it is $2$--connected.
We will denote
 by $C_n$ the cycle on $n$ vertices and by $K_n$  the complete graph on $n$ vertices.\\
 A set $T\subset  V (G)$ is called a \emph{cutset} of $G$ if $T= \emptyset$ or $c(T \setminus \{v\}) < c(T)$ for each $v\in T$, where $c(T)$ denotes the number of connected components induced by removing $T$ from $G$. We denote by $\mathcal{C}(G)$ the set of all cutsets of $G$. When $T\in \mathcal{C}(G)$ consists of one vertex $v$, $v$ is called a \emph{cutpoint}.\\
Let us recall the notion of a \emph{circulant graph}. Let $S\subseteq \{1,2, \ldots, \lfloor \frac n2 \rfloor\}$. The \emph{circulant graph} $G : = C_n(S)$ is a
simple graph with $V(G) = \ZZ_n = \{0, \ldots, n-1\}$ and $E(G) := \{\{i, j\} :  |j - i|_n \in S\}$ where $|k|_n = \min\{|k|, n - |k|\}$. $C_n(S)$ is the circulant graph of order $n$ with generating set $S$ and $|k|_n$ is the circular distance modulo $n$.
It is well--known that if the generating set $S = \{\pm 1,\pm 2, \ldots, \pm d\}$, where $1\le d \le \lfloor \frac n2 \rfloor$ is a given integer, then the circulant graph $C_n(S)$ is then equivalent to the \emph{$d$-th power} of $C_n$, where two vertices are adjacent if and only if their distance is at most $d$  \cite{RHO}.
In such a case, we will denote $C_n(S)$ by $C_n(1,2, \ldots, d)$. $C_n(1,2, \ldots, d)$ is called \emph{$d$-th power cycle}.\\
We close the section discussing the \emph{generalized binomial edge ideals}. Such a class has been introduced in \cite{JR}. Let $m, n \ge 2$ be integers and let $G$ be an arbitrary simple graph on the vertex set $[n]$.
Let $X= (x_{ij})$ be an $(m\times n)$--matrix of indeterminates, and denote by $R = K[X]$ the polynomial ring in the variables
$x_{ij}$, $i = 1, \ldots, m$ and $j=1, \ldots, n$. The generalized binomial edge ideal $J_G$ of $G$ is generated by
all the $2$--minors of $X$ of the form $[k, \ell\vert i, j] = x_{ki}x_{\ell j} - x_{kj}x_{\ell i}$, where $1\le k\le m$ and $\{i, j\}$ is an edge of $G$ with $i<j$.
When $m = 2$, $J_G$ coincides with the classical binomial edge ideal $J_G$ introduced in \cite{HHHKR, MO}. \\
Through the paper, in order to highlight the integer $m$, we will denote it by $J_{G, m}$.

\section{Unmixedness}\label{sec:3}

In this section we study the unmixedness of classes of generalized binomial edge ideal \emph{via} the cutsets.

Throughtout this paper we assume that $G$ is a connected graph on $[n]$ and let $\mathcal{C}(G)$ be the set of all cutsets of $G$. Let $T \in \mathcal{C}(G)$ and let $G_1, \dots, G_{c(T)}$ denote the connected components of the induced graph $G\setminus T$. 

Let us consider the ideal
\[
P_T(G) = \left( \bigcup_{i \in T} \{x_{1,i},\ldots, x_{m,i}\}, J_{\tilde{G}_1, m}, \dots, J_{\tilde{G}_{c(T)},m} \right)
\]
of the polynomial ring $R$, where $\tilde{G}_i$ is the complete graph on $V(G_i)$, for $i=1, \dots, c(T)$. From \cite[Section 3]{AK} (see also \cite{JR, EHHQ}), one has that
\begin{equation}\label{Eq:primarydec}
J_{G, m} = \bigcap_{T \in \mathcal{C}(G)} P_T(G). 
\end{equation}

The next result comes essentially from \cite[Proposition 4.1]{EHHQ}. We reformulate it for our purpose. 

\begin{lemma}\label{lem:unmixed}
Let $G$ be a graph with $n$ vertices. Then the generalized binomial edge ideal $J_{G, m}$ is unmixed  if and only if for all cutset $T$ of $G$, one has $$c(T)=\frac{|T|}{m-1}+1.$$
\end{lemma}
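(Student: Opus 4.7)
The plan is to derive the unmixedness condition directly from the primary decomposition (\ref{Eq:primarydec}). The first step is to observe that, by the results cited from \cite{JR, AK, EHHQ}, this decomposition is actually the \emph{minimal} primary decomposition of $J_{G,m}$: the ideal is radical and its minimal primes are precisely the ideals $\{P_T(G):T\in\mathcal{C}(G)\}$. Consequently, $J_{G,m}$ is unmixed if and only if $\height(P_T(G))$ takes the same value for every $T\in\mathcal{C}(G)$.

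The heart of the argument is the height computation for $P_T(G)$. Let $G_1,\dots,G_{c(T)}$ denote the connected components of $G\setminus T$, and set $r_i=|V(G_i)|$. The generators of $P_T(G)$ split naturally into groups involving pairwise disjoint sets of variables: the $m|T|$ linear forms $x_{ki}$ with $k\in\{1,\dots,m\}$, $i\in T$, and, for each $i$, the generators of $J_{\tilde{G}_i,m}$, which only involve the $mr_i$ variables indexed by $V(G_i)$. Because these variable supports are disjoint, heights are additive:
$$\height\bigl(P_T(G)\bigr) = m|T| + \sum_{i=1}^{c(T)} \height\bigl(J_{\tilde{G}_i,m}\bigr).$$
Since $\tilde{G}_i=K_{r_i}$, the ideal $J_{\tilde{G}_i,m}$ is exactly the ideal of $2$-minors of a generic $m\times r_i$ matrix, whose height equals $(m-1)(r_i-1)$ by the classical result on determinantal ideals. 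Using $\sum_i r_i = n-|T|$ to collapse the sum, I would therefore obtain
$$\height\bigl(P_T(G)\bigr) = m|T| + (m-1)\bigl(n - |T| - c(T)\bigr).$$

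Taking $T=\emptyset$ (which lies in $\mathcal{C}(G)$ by definition), one gets $\height(P_\emptyset(G))=(m-1)(n-1)$, the height of $J_{K_n,m}$. Thus unmixedness is equivalent to the equality
$$m|T|+(m-1)\bigl(n-|T|-c(T)\bigr)=(m-1)(n-1)\qquad\text{for every }T\in\mathcal{C}(G),$$
which a routine simplification converts into $c(T)=\dfrac{|T|}{m-1}+1$, as desired.

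The main obstacle is the appeal to the minimality of the primary decomposition: we need the $P_T(G)$ to be pairwise incomparable primes so that the equal-height condition really must be verified for \emph{every} cutset and not just a subset of them. For binomial edge ideals this is classical, and \cite[Proposition 4.1]{EHHQ} together with the results in \cite{AK,JR} extend the statement to the generalized setting invoked here. Once this is in hand, everything else reduces to the disjoint-variable additivity of heights and the well-known height of the ideal of $2$-minors of a generic matrix.
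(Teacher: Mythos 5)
Your proposal is correct and takes essentially the same route as the paper's proof: both deduce unmixedness from the decomposition (\ref{Eq:primarydec}), compute $\height P_T = m|T| + (m-1)\sum_{i=1}^{c(T)}(n_i-1)$ via the classical height of the $2$-minors of a generic matrix, and simplify using $\sum_{i=1}^{c(T)}(n_i-1) = n-|T|-c(T)$ against $\height P_\emptyset = (m-1)(n-1)$. The only difference is that you make explicit two points the paper leaves implicit through its citations to \cite{AK, EHHQ}, namely the minimality of the decomposition and the additivity of heights over disjoint sets of variables.
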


\begin{proof}
Since $J_{G,m}$ is unmixed then $\height J_{G,m}=\height P_\emptyset=(m-1)(n-1)$. Moreover 
\[
 \height P_T=(m-1)\sum_{i=1}^{c(T)} (n_i-1) +m|T|
\]
where $n_i=|G_i|$ and $G\setminus T=\sqcup_{i=1}^{c(T)} G_i$.
One can easily observe that 
\[
 \sum_{i=1}^{c(T)} (n_i-1)=n-|T|-c(T).
\]
Using the above equations the assertion follows.\\
The converse holds by the same arguments.
\end{proof}

\begin{remark}\label{rem:unmixed}
Under the same hypotheses of Lemma \ref{lem:unmixed}, for all cutset $T\neq \emptyset$ of $G$, one has that $m-1$ divides $|T|$.
\end{remark}

As a consequence we obtain the next result.

\begin{corollary} \label{cor:biconnected} Let $G$ be a graph with $n$ vertices. If $J_{G, m}$ is an unmixed generalized binomial edge ideal with $m\geq 3$, then
 \begin{enumerate}
  \item[\em(1)] $G$ is $(m-1)$--connected;
  \item[\em(2)] $\deg(v)\geq m-1$ for all $v\in V(G)$.
  \item[\em(3)] $G$ is complete if $m\geq n$.
 \end{enumerate}
\end{corollary}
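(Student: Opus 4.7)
The plan is to derive all three assertions from Remark~\ref{rem:unmixed}, which forces every non-empty cutset to have cardinality at least $m-1$. I would prove (1) first and then obtain (3) and (2) from it.

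For (1), the key step is to show that every minimal vertex separator $T$ of $G$ belongs to $\mathcal{C}(G)$. By minimality of $T$ as a separator, for each $v\in T$ the set $T\setminus\{v\}$ must fail to separate some pair of vertices that $T$ separates; this forces $v$ to have neighbours in at least two distinct connected components of $G\setminus T$, and consequently $c(T\setminus\{v\})<c(T)$. Hence $T$ is a cutset in the sense of Section~\ref{sec:2}, and Remark~\ref{rem:unmixed} gives $|T|\geq m-1$. Therefore $G$ has no vertex separator of size smaller than $m-1$, and since $G$ is connected it is $(m-1)$--connected.

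For (3), I would argue by contrapositive. Assuming $G$ is not complete, pick non-adjacent vertices $i,j\in V(G)$ and let $T$ be a minimal $i$-$j$ vertex separator. Then $T$ is a non-empty cutset contained in $V(G)\setminus\{i,j\}$, so $1\leq|T|\leq n-2$. Combining this with Remark~\ref{rem:unmixed} gives $m-1\leq n-2$, i.e.\ $m<n$; the contrapositive shows that $m\geq n$ forces $G=K_n$.

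For (2), fix $v\in V(G)$. If $v$ is universal in $G$ then $\deg(v)=n-1$, and by (3) one may reduce to the case $n\geq m$ and conclude. Otherwise $v$ has a non-neighbour $w$; applying (1) and Menger's theorem to $G$ produces $m-1$ internally vertex-disjoint $v$-$w$ paths, whose distinct second vertices all lie in $N(v)$, so $\deg(v)\geq m-1$.

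The main subtlety I foresee is the verification in (1) that a minimal vertex separator in the classical graph-theoretic sense really satisfies the stricter combinatorial cutset condition used here (removal of \emph{any} single vertex must strictly decrease the component count). Once (1) is secured, parts (2) and (3) follow by specialising to the appropriate minimal separator and invoking Remark~\ref{rem:unmixed}.
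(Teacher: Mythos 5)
Your proposal is correct, and it is genuinely more self-contained than the paper's proof, which consists of three pointers: the paper disposes of (1) with the single sentence ``it follows by Lemma~\ref{lem:unmixed}'', obtains (2) by citing Harary's inequality $\kappa(G)\le\lambda(G)\le\delta(G)$ (\cite[Theorem 5.1]{HARARY}), and deduces (3) from (2) via $\deg(v)\ge m-1\ge n-1$. The bridge you make explicit in (1) --- that an inclusion-minimal vertex separator $T$ satisfies the stricter combinatorial cutset condition $c(T\setminus\{v\})<c(T)$ for every $v\in T$, because minimality forces each such $v$ to have neighbours in at least two components of $G\setminus T$ --- is precisely what the paper's one-line proof suppresses, and your verification of it is sound: reinserting such a $v$ merges at least two components, so the count strictly drops, $T\in\mathcal{C}(G)$, and Remark~\ref{rem:unmixed} gives $|T|\ge m-1$. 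After that your route diverges from the paper's: you prove (3) directly from (1) by squeezing a minimal separator between two non-adjacent vertices ($m-1\le|T|\le n-2$), whereas the paper routes (3) through the degree bound (2); and you obtain (2) from $(m-1)$-connectivity via Menger's theorem (distinct second vertices of $m-1$ internally disjoint $v$--$w$ paths lie in $N(v)$) rather than quoting $\kappa\le\delta$. Both routes work; yours trades a textbook citation for a slightly heavier tool but stays entirely inside the paper's cutset framework, and proving (3) before (2) is exactly what legitimizes your appeal to (3) in the universal-vertex case of (2).

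One caveat, which you share with the paper rather than introduce: when $G=K_n$ and $m>n$, the ideal $J_{K_n,m}=I_2(X)$ is prime, hence unmixed, yet $\deg(v)=n-1<m-1$ and $K_n$ fails to be $(m-1)$-connected under the paper's definition (which requires $m-1<n$); so (1) and (2) as literally stated break down there, your phrase ``by (3) one may reduce to the case $n\ge m$'' cannot actually dispose of this case (from $m\ge n$ and (3) one only gets $G$ complete, not $n\ge m$), and the paper's citation of Harary hides the same defect. For non-complete $G$ --- the only case with any content, and the only one used downstream in Corollary~\ref{cor:main} --- your argument is complete, since there your proof of (3) yields $m\le n-1$, which settles universal vertices in (2) as well.
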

\begin{proof}
 (1) It follows by Lemma \ref{lem:unmixed}. (2) See \cite[Theorem 5.1]{HARARY} . (3) By (2) the assertion follows.
\end{proof}

The previous results allow us to state a classification of three special classes of unmixed generalized binomial edge ideal $J_{G, m}$ for $m\geq 3$. Moreover, at the end of this section we present a table that shows the cardinality of the set of any unmixed $J_{G,m}$ with $n\leq 10$.

Let $A$ be a set of vertices of a graph $G$. The \emph{neighbor set}
of $A$, denoted by $N_G(A)$ or simply by $N(A)$ if $G$ is understood, is the set
of vertices of $G$ that are adjacent with at least one vertex of $A$. Moreover, if $A = \{v\}$, we denote by $N[v] = N(v)\cup \{v\}$.

\begin{proposition}\label{prop:quasi}
Let $n>3$ and let $G$  be a graph on $n$ vertices such that $\deg v\geq n-2$, for all $v\in V(G)$. Then $J_{G,n-1}$ is unmixed.
 
\end{proposition}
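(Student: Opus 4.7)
The plan is to apply Lemma \ref{lem:unmixed} with $m=n-1$: we must verify that every cutset $T$ of $G$ satisfies $c(T)=|T|/(n-2)+1$. First, Remark \ref{rem:unmixed} (applied a posteriori) and a direct arithmetic check tell us which cardinalities are even admissible: we need $(n-2)\mid |T|$ for $T\neq\emptyset$, and since $n>3$ we have $2(n-2)>n-1$, so the only candidate sizes are $|T|=0$ and $|T|=n-2$. For $|T|=0$ the condition $c(\emptyset)=1$ is the hypothesis that $G$ is connected, and for $|T|=n-2$ it reads $c(T)=2$. So the real content of the proof is: show that these are the \emph{only} cutsets and that they automatically realize the correct value of $c(T)$.

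Next I would prove a density/connectivity lemma: for every $T\subseteq V(G)$ with $1\le |T|\le n-3$, the graph $G\setminus T$ is connected. Let $k=|T|$. Since each vertex of $G$ has degree $\ge n-2$, every $v\in V(G\setminus T)$ satisfies $\deg_{G\setminus T}(v)\ge (n-2)-k$, so among the $n-k-1$ other vertices of $G\setminus T$ it has at most $(n-k-1)-(n-2-k)=1$ non-neighbor. Therefore any non-adjacent pair $v,w\in V(G\setminus T)$ are each other's unique non-neighbor in $G\setminus T$; since $n-k\ge 3$, pick any third vertex $u\in V(G\setminus T)\setminus\{v,w\}$, and $v{-}u{-}w$ is a path in $G\setminus T$. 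Hence $c(T)=1$, and the cutset axiom $c(T\setminus\{v\})<c(T)=1$ would force $c(T\setminus\{v\})=0$, i.e.\ $G\setminus(T\setminus\{v\})=\emptyset$, which is impossible. So no $T$ with $1\le |T|\le n-3$ is a cutset.

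Then I would handle the remaining large-cardinality cases. For $|T|=n-1$ or $|T|=n$, the graph $G\setminus T$ has at most one vertex, so $c(T)\le 1$, and the same reasoning as above rules these out as cutsets. This leaves $|T|=n-2$. Such a $T$ is a cutset exactly when $G\setminus T$ consists of the two remaining vertices with no edge between them, in which case $c(T)=2$; to see the cutset axiom, for any $v\in T$ the graph $G\setminus(T\setminus\{v\})$ has three vertices, and since $v$ has at most one non-neighbor in $G$, $v$ is joined to at least one of the other two, making this three-vertex graph connected, so $c(T\setminus\{v\})=1<2=c(T)$.

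Putting the pieces together, every cutset $T$ of $G$ has either $|T|=0$ with $c(T)=1$, or $|T|=n-2$ with $c(T)=2$; in both cases $c(T)=|T|/(n-2)+1$, and Lemma \ref{lem:unmixed} yields the unmixedness of $J_{G,n-1}$. The only step that requires a small argument rather than bookkeeping is the connectivity claim for $|T|\le n-3$, but since the minimum-degree hypothesis forces every vertex to have at most one non-neighbor in $G\setminus T$, it follows by a one-line path argument; no field-theoretic or primary-decomposition input beyond \eqref{Eq:primarydec} is needed.
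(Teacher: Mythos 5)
Your proof is correct and follows essentially the same route as the paper: both verify the criterion of Lemma \ref{lem:unmixed} by showing that the only cutsets of $G$ are $\emptyset$ and the sets $T=N(v)=N(w)$ of cardinality $n-2$ separating a non-adjacent pair $v,w$, with $c(T)=2$; your cardinality casework in fact supplies the connectivity argument for $1\le |T|\le n-3$ that the paper only asserts. One small repair in your $|T|=n-2$ step: connectivity of the three-vertex graph $G\setminus(T\setminus\{v\})$ does not follow merely from $v$ being joined to one of the two outside vertices, but it does follow from your own earlier observation that a non-adjacent pair are each other's \emph{unique} non-neighbors, so \emph{both} outside vertices are adjacent to $v$.
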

\begin{proof}
We claim that if $T$ is a non--empty cutset, then $T=N(v)$ with $\deg(v)=n-2$. \\
If  $\deg v=n-2$, then $G\setminus N(v)$ has two components: the vertex $v$ and the vertex $w$ which is the only one vertex not adjacent to $v$.

Since the graph has $n$ vertices, then the cutsets have the same cardinality $n-2$. In fact, if $T$ is a cutset then $T=N(v)=N(w)$, where $\deg(v)=\deg(w)=n-2$ and $\{v,w\}\notin E(G)$. The claim follows.  Finally, from Lemma \ref{lem:unmixed}, $J_{G,n-1}$ is unmixed.
\end{proof}

\begin{example}\label{ex:nfour}
Let $n=4$. There are only three graphs satisfying the hypotheses of Proposition~\ref{prop:quasi}. They are the following:

\begin{figure}[H]
\begin{center}
\begin{tikzpicture} [rotate=90,scale=0.6, vertices/.style={draw, fill=black, circle, inner sep=1.0pt}]
\def \n {4}
\def \radius {1.5cm}
\pgfmathtruncatemacro\nv{\n-1}
\def \S {1}
%\node (G) at ({60}:\radius+1cm) {$G$};
%Vertices
\foreach \v in {0,...,\nv}{
   \node[vertices] (\v) at ({-360/\n * \v}:\radius) {};
   %\node at ({-360/\n * \v}:\radius+.3cm) {${\v}$};
}
%Edges
\foreach \v in {0,...,\nv}{
   \foreach \s in \S{
      \pgfmathtruncatemacro\m{mod(\v+\s,\n)}
      \path (\v) edge node[pos=0.5,below] {} (\m);
   }
}
\end{tikzpicture}
\hspace{1cm}
\begin{tikzpicture} [rotate=90,scale=0.6, vertices/.style={draw, fill=black, circle, inner sep=1.0pt}]
\def \n {4}
\def \radius {1.5cm}
\pgfmathtruncatemacro\nv{\n-1}
\def \S {1}
%Vertices
\foreach \v in {0,...,\nv}{
   \node[vertices] (\v) at ({-360/\n * \v}:\radius) {};
   %\node at ({-360/\n * \v}:\radius+.3cm) {${\v}$};
}
%Edges
\foreach \v in {0,...,\nv}{
   \foreach \s in \S{
      \pgfmathtruncatemacro\m{mod(\v+\s,\n)}
      \path (\v) edge node[pos=0.5,below] {} (\m);
   }
}
\path (0) edge node[pos=0.5,below] {} (2);
\end{tikzpicture}
\hspace{1cm}
\begin{tikzpicture} [rotate=90,scale=0.6, vertices/.style={draw, fill=black, circle, inner sep=1.0pt}]
\def \n {4}
\def \radius {1.5cm}
\pgfmathtruncatemacro\nv{\n-1}
\def \S {1,2}
%\node (G) at ({60}:\radius+1cm) {$G$};
%Vertices
\foreach \v in {0,...,\nv}{
   \node[vertices] (\v) at ({-360/\n * \v}:\radius) {};
   %\node at ({-360/\n * \v}:\radius+.3cm) {${\v}$};
}
%Edges
\foreach \v in {0,...,\nv}{
   \foreach \s in \S{
      \pgfmathtruncatemacro\m{mod(\v+\s,\n)}
      \path (\v) edge node[pos=0.5,below] {} (\m);
   }
}
\end{tikzpicture}
\end{center}
\end{figure}
\end{example}

\begin{proposition}
Let $G$ be a bipartite graph with $n$ vertices and let $J_{G,m}$ be an unmixed ideal with $m\geq 3$.
 \begin{enumerate}
  \item[\em (1)] If $m=3$, then $G\in\{C_4,K_2\}$.
  \item[\em (2)] if $m>3$, then $G=K_2$.
 \end{enumerate}
\end{proposition}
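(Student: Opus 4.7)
The plan is to start by disposing of the trivial case $G = K_2$: since $J_{K_2, m}$ is the classical determinantal ideal of $2 \times 2$ minors of a generic $m \times 2$ matrix, it is Cohen--Macaulay and hence unmixed for every $m \ge 2$, so $K_2$ is consistent with both conclusions. From now on I would assume $G \ne K_2$, so $G$ is connected bipartite with at least three vertices. By Corollary~\ref{cor:biconnected}(2) every vertex of $G$ then has degree at least $m - 1 \ge 2$. Writing the bipartition as $V(G) = A \sqcup B$, this degree bound rules out $|A| = 1$ (else the unique vertex of $A$ would force every vertex of $B$ to have degree $1$) and symmetrically $|B| \ge 2$; I set $a = |A|$ and $b = |B|$.

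The key step, and the one where I would focus attention, is to show that both $A$ and $B$ are cutsets of $G$. For $A$: removing $A$ leaves the $b$ vertices of $B$ isolated, so $c(A) = b \ge 2$. Removing instead $A \setminus \{v\}$ for some $v \in A$ leaves the star centred at $v$ with leaf set $N(v)$ together with the $b - \deg(v)$ isolated vertices in $B \setminus N(v)$, giving $1 + b - \deg(v)$ components. Since $\deg(v) \ge 2$, this is strictly less than $b$, so $A \in \mathcal{C}(G)$, and by symmetry $B \in \mathcal{C}(G)$.

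The conclusion is then a short arithmetic check. Lemma~\ref{lem:unmixed} applied to $A$ and $B$ yields $b = a/(m-1) + 1$ and $a = b/(m-1) + 1$. Setting $k = m - 1 \ge 2$ and substituting one into the other, these collapse to $(k-1)a = k$, which (since $\gcd(k-1, k) = 1$) forces $k = 2$, i.e.\ $m = 3$ and $a = b = 2$. Any bipartite graph on two plus two vertices with minimum degree $\ge 2$ is necessarily $K_{2,2} = C_4$. For $m > 3$ this case cannot occur, so the only possibility is $G = K_2$, proving (2); for $m = 3$ both $K_2$ and $C_4$ appear, proving (1). The only genuinely non-routine part of this plan is the cutset verification in the previous paragraph, where bipartiteness together with the minimum-degree bound is essential; everything else is a divisibility argument and a uniqueness check for four-vertex bipartite graphs.
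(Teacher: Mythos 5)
Your proposal is correct and takes essentially the same route as the paper's proof: both parts of the bipartition $V_1\sqcup V_2$ are shown to be cutsets, Lemma~\ref{lem:unmixed} then yields $|V_1|=(m-1)(|V_2|-1)$ and $|V_2|=(m-1)(|V_1|-1)$, and the only integer solution is $m=3$ with $|V_1|=|V_2|=2$, forcing $G=C_4$. Your explicit component count $c(A\setminus\{v\})=1+b-\deg(v)<b=c(A)$ even fixes the direction of the inequality, which appears reversed (an evident typo) in the paper's verification, and your separate handling of $K_2$ and of $|A|=1$ spells out details the paper leaves implicit.
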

\begin{proof}
Let $G$ be a bipartite graph with vertex set $V_1\sqcup V_2$. Firstly, we observe that since $m\geq 3$, then $G$ is biconnected (Corollary \ref{cor:biconnected}). Furthermore,  if $G$ is not $K_2$, then $V_1,V_2\in \CC(G)$.
In fact, $G\setminus V_1$, respectively $G\setminus V_2$, is the set of isolated vertices $V_2$, respectively $V_1$. If we consider $V_1\setminus \{v\}$ then $c\left(V_1\setminus \{v\}\right)>c(V_1)$, indeed each $v\in V_i$ has degree at least $2$.
Moreover,  
\begin{eqnarray*}
 |V_1|&=&(m-1)(|V_2|-1),\\
 |V_2|&=&(m-1)(|V_1|-1).
\end{eqnarray*}
Note that the only integer solutions for these equations are obtained for $m=3$ and $|V_1|=|V_2|=2$.
\end{proof}

\begin{remark}
We refer to the paper \cite{BMS} for bipartite graphs when $m=2$.
\end{remark}

Now, we focus our attention on a family of circulant graphs. Recall that such graphs are biconnected.

\begin{lemma}\label{lem:powercut}
 Let $G=C_n(S)$ be a non--complete power cycle. Then $S\in\mathcal{C}(G)$.
\end{lemma}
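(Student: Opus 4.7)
The plan is to interpret the claim as saying that $S$, viewed as the full set of residues at circular distance at most $d$ from $0$ in $\ZZ_n$, coincides with the neighborhood $N(0) = \{1, 2, \ldots, d\} \cup \{n-d, n-d+1, \ldots, n-1\}$, and to show this vertex set is a cutset. Non-completeness of $G = C_n(1, 2, \ldots, d)$ forces $d < \lfloor n/2 \rfloor$ and hence $n \ge 2d + 2$. In particular,
\[
V(G) \setminus S = \{0\} \cup \{d+1, d+2, \ldots, n-d-1\},
\]
which has cardinality $n - 2d \ge 2$.

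I would first verify that $c(S) = 2$. Since every neighbor of $0$ in $G$ lies in $N(0) = S$, the vertex $0$ is isolated in $G \setminus S$. The remaining vertices $\{d+1, d+2, \ldots, n-d-1\}$ form a nonempty block of consecutive residues, and consecutive integers are at circular distance $1 \in \{1,\ldots,d\}$, so this block induces a connected subgraph (a Hamiltonian path at the very least). Therefore $G \setminus S$ has exactly the two components $\{0\}$ and $\{d+1,\ldots,n-d-1\}$.

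Next, for each $v \in S$, I would show $c(S \setminus \{v\}) = 1$. By the definition of $S = N(0)$, the vertex $v$ is adjacent to $0$. A short case analysis on whether $v$ lies in $\{1,\ldots,d\}$ or in $\{n-d,\ldots,n-1\}$ shows that $v$ is also adjacent to one of the boundary vertices $d+1$ or $n-d-1$: if $v = k$ with $1 \le k \le d$, then the circular distance to $d+1$ is $\min(d+1-k,\, n-d-1+k)$, and $n \ge 2d+2$ guarantees $d+1-k$ is the minimum, so $v$ is adjacent to $d+1$; the symmetric case $v = n - k$ gives adjacency to $n-d-1$ at distance $d+1-k \le d$. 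Hence the single vertex $v$ merges $\{0\}$ with the large component, producing one connected graph. Combining the two steps yields $c(S\setminus\{v\}) = 1 < 2 = c(S)$ for every $v \in S$, proving $S \in \mathcal{C}(G)$.

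The main obstacle is not mathematical depth but bookkeeping in $\ZZ_n$: one must verify that the "short" distance $d+1-k$ is indeed the circular minimum (which follows from $n \ge 2d + 2$) and handle the degenerate case $n = 2d+2$, where the big component collapses to the single vertex $\{d+1\}$. In this boundary case the argument still goes through, because each $v \in S$ is within distance $d$ of both $0$ and $d+1$ by direct verification.
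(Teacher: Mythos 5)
Your proof is correct and follows essentially the same route as the paper: you identify $S$ with $N(0)$, observe that $G\setminus S$ consists of the isolated vertex $0$ together with a connected block of consecutive residues, and then show that restoring any $v\in S$ reconnects $0$ to that block via $d+1$ or $n-d-1$, so $c(S\setminus\{v\})<c(S)$. Your explicit verification that $c(S)=2$ and the circular-distance bookkeeping (including the boundary case $n=2d+2$) are details the paper defers to a remark or leaves implicit, but the argument is the same.
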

\begin{proof} Let $G=C_n(1,2,\ldots,r)$, that is, $S=\{1,2,\ldots,r\}\cup \{n-r,n-r+1,\ldots,n-1\}$.
%$=\{\pm 1,\pm 2,\ldots,\pm r\}$
Since $G$ is non--complete, then $|S|$ is even and $r<\left\lfloor \frac{n}{2}\right\rfloor$. The vertex set of the subgraph induced by removing $S$ is $V(G\setminus S)=\{0\}\cup\{r+1,r+2,\ldots,n-r-1\}$. We observe that the set of vertices $\{r+1,r+2,\ldots,n-r-1\}$ is non--empty (from the non--completeness of $G$), and $0$ is an isolated vertex of $G\setminus S$.
Hence, the induced graph $G\setminus S$ has at least two connected components.
Moreover, $S$ is a cutset for $G$. Indeed, if $v\in S$ and $S'=S\setminus\{v\}$, then either $\{0,v\}, \{v,r+1\}\in E\left(G\setminus S'\right)$ or $\{0,v\}, \{v,n-r-1\}\in E\left(G\setminus S'\right)$, that is, $0$ is not isolated in the  subgraph induced by removing $S'$. This implies that $c(S')<c(S)$ and $S\in\mathcal{C}(G)$.
\end{proof}

\begin{remark}\label{rem:powercut}
Let $G=C_n(1,2,\ldots,r)$ be a non--complete power cycle. In the proof of Lemma~\ref{lem:powercut}, we state that $G\setminus S$ has at least two connected components. More precisely, $G\setminus S$ has exactly two connected components: $\{0\}$ and a connected component $K$ with vertices $\{r+1,r+2,\ldots,n-r-1\}$. 
The connectedness of $K$ is assured by the existence of a path from $r+1$ to $n-r-1$, since $1\in S$. In general, $K$ is not a complete subgraph.
\end{remark}

\begin{corollary}\label{cor:power1}
Let $G=C_n(S)$ be a non--complete power cycle and let $m$ be an integer greater than $1$. 
If $J_{G,m}$ is unmixed, then $|S|=m-1$. 
\end{corollary}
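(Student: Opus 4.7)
The plan is to chain together the three previous results about power cycles and unmixedness, since Lemma~\ref{lem:powercut} and Remark~\ref{rem:powercut} hand us the data needed to plug directly into the unmixedness formula of Lemma~\ref{lem:unmixed}.

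First I would recall that, by Lemma~\ref{lem:powercut}, the set $S = \{1,\ldots,r\}\cup\{n-r,\ldots,n-1\}$ (identified with vertices of $G$ via the labeling $V(G)=\ZZ_n$) is a cutset of $G = C_n(1,2,\ldots,r)$. Next I would invoke Remark~\ref{rem:powercut}, which tells us that the induced subgraph $G\setminus S$ has exactly two connected components, namely $\{0\}$ and a component $K$ on the vertex set $\{r+1,\ldots,n-r-1\}$. Thus $c(S)=2$.

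At this point the result falls out of Lemma~\ref{lem:unmixed} applied to the specific cutset $T=S$: since $J_{G,m}$ is unmixed by assumption, one has
\[
2 \;=\; c(S) \;=\; \frac{|S|}{m-1}+1,
\]
which immediately gives $|S|=m-1$, as desired.

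The step I expect to require the most care is simply ensuring the notation is consistent: the symbol $S$ plays a double role, appearing in the definition $C_n(S)$ as the generating subset of $\{1,\ldots,\lfloor n/2\rfloor\}$, and also (as in the proof of Lemma~\ref{lem:powercut}) as the corresponding vertex subset of $\ZZ_n$ used as a cutset. Once this identification is made and the non--completeness hypothesis is used to guarantee that $S$ is a proper subset of the vertex set (so Lemma~\ref{lem:powercut} applies and $c(S)=2$ rather than $1$), no further obstacle arises; the conclusion is a one--line arithmetic consequence of Lemma~\ref{lem:unmixed}.
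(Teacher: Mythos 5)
Your proposal is correct and matches the paper's own proof exactly: it combines Lemma~\ref{lem:powercut} and Remark~\ref{rem:powercut} to get $S\in\mathcal{C}(G)$ with $c(S)=2$, then applies Lemma~\ref{lem:unmixed} to the cutset $T=S$ to obtain $2=\frac{|S|}{m-1}+1$, hence $|S|=m-1$. Your added remark about $S$ doubling as generating set and vertex subset is a reasonable clarification, but the argument itself is the same as the paper's.
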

\begin{proof}
From Lemma~\ref{lem:powercut} and Remark~\ref{rem:powercut} we have that $S\in\mathcal{C}(G)$ and $c(S)=2$. 
Moreover, from Lemma~\ref{lem:unmixed} we have that $2=\frac{|S|}{m-1}+1$. The assertion follows.
\end{proof}

\begin{example}
In general, if $G=C_n(S)$ is not a power cycle, then the result of Corollary~\ref{cor:power1} does not hold. Indeed, we can consider the circulant graph $G=C_8({1,3,4})$. In such a case, $S=\{1,3,4,5,7\}$. The generalized binomial edge ideal $J_{G,5}$ is unmixed and $|S|\neq 4$.

Moreover, we observe that $S$ is not a cutset for $G$. Indeed, $S$ properly contains the cutset $T=\{1,3,5,7\}$ for $G$.
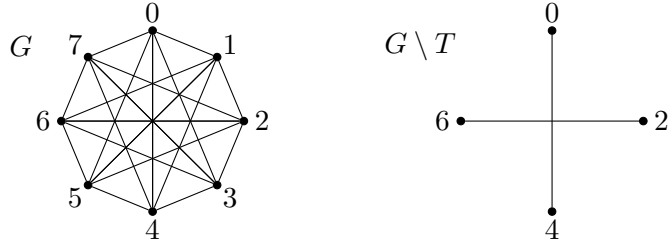
\begin{figure}[H]
\begin{center}
\begin{tikzpicture} [rotate=90,scale=0.8, vertices/.style={draw, fill=black, circle, inner sep=1.0pt}]
\def \n {8}
\def \radius {1.5cm}
\pgfmathtruncatemacro\nv{\n-1}
\def \S {1,3,4}
\node (G) at ({60}:\radius+1cm) {$G$};
%Vertices
\foreach \v in {0,...,\nv}{
   \node[vertices] (\v) at ({-360/\n * \v}:\radius) {};
   \node at ({-360/\n * \v}:\radius+.3cm) {${\v}$};
}
%Edges
\foreach \v in {0,...,\nv}{
   \foreach \s in \S{
      \pgfmathtruncatemacro\m{mod(\v+\s,\n)}
      \path (\v) edge node[pos=0.5,below] {} (\m);
   }
}
\end{tikzpicture}
\hspace{1cm}
\begin{tikzpicture} [rotate=90,scale=0.8,vertices/.style={draw, fill=black, circle, inner sep=1.0pt}]
\def \n {8}
\def \radius {1.5cm}
\pgfmathtruncatemacro\nv{\n-1}
\def \S {1}
\node (GS) at ({60}:\radius+1cm) {$G\setminus T$};
%Vertices
\foreach \v in {0,2,4,6}{
   \node[vertices] (\v) at ({-360/\n * \v}:\radius) {};
   \node at ({-360/\n * \v}:\radius+.3cm) {${\v}$};
}
\node (P) at ({180}:\radius+.3cm) {\phantom{4}};
%Edges
\path (2) edge node[pos=0.5,below] {} (6);
\path (0) edge node[pos=0.5,below] {} (4);
\end{tikzpicture}
\caption{The circulant graph $G=C_8({1,3,4})$ and $G\setminus S$.}
\end{center}
\end{figure}
\end{example}

\begin{remark} Let $G=C_n(S)$ be a circulant graph such that $|S|$ is odd, then $n=2k$ and $k\in S$. Indeed, if $|S|$ is odd, then there exists $s\in S$ such that $s\equiv -s$. The assertion follows.
\end{remark}
Now, we will show some results in order to characterize the power cycles with unmixed generalized binomial edge ideal. 

\begin{lemma}\label{lem:evenpower}
Let $G=C_n(1,\ldots,r)$ be a non--complete power cycle and let $m$ be an even integer greater than $1$. 
Then $J_{G,m}$ is not unmixed.
\end{lemma}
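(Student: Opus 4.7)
The plan is to derive a parity contradiction from Corollary~\ref{cor:power1}.

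First I would recall that for a non--complete power cycle $G = C_n(1,2,\ldots,r)$, the generating set written out symmetrically is
\[
S = \{1,2,\ldots,r\} \cup \{n-r, n-r+1, \ldots, n-1\},
\]
so $|S| = 2r$ is even. (Indeed this is exactly what is used in the proof of Lemma~\ref{lem:powercut}, where non--completeness forces $r < \lfloor n/2 \rfloor$ and so the two ``halves'' of $S$ are disjoint.)

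Next I would invoke Corollary~\ref{cor:power1}: if $J_{G,m}$ were unmixed, then necessarily $|S| = m-1$. Combining this with the fact that $|S|$ is even gives that $m-1$ must be even, i.e.\ $m$ must be odd. This directly contradicts the hypothesis that $m$ is an even integer greater than $1$. Hence $J_{G,m}$ cannot be unmixed.

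The argument is essentially a one--line parity observation once Corollary~\ref{cor:power1} is in place, so there is no real obstacle; the substantive content sits in Lemma~\ref{lem:powercut} and Corollary~\ref{cor:power1}, which together guarantee that $S \in \mathcal{C}(G)$ with $c(S)=2$ and then force the divisibility relation $(m-1) \mid |S|$ (with quotient $1$) via Lemma~\ref{lem:unmixed}. The only care needed in the write--up is to make explicit that non--completeness is precisely what ensures the two arcs of $S$ do not overlap, so that the count $|S|=2r$ really is even.
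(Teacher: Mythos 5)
Your proof is correct and follows essentially the same route as the paper: both invoke Corollary~\ref{cor:power1} to force $|S|=m-1$ odd and contradict this with the parity of $|S|$, the paper phrasing it as ``$|S|$ odd forces $r\equiv_n -r$, hence $G$ complete'' while you phrase the contrapositive ``$G$ non--complete forces $|S|=2r$ even.'' Your explicit remark that non--completeness makes the two arcs of $S$ disjoint is exactly the point the paper relies on (via Lemma~\ref{lem:powercut}), so there is no gap.
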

\begin{proof}
From Corollary~\ref{cor:power1}, if we assume that $J_{G,m}$ is unmixed, then $|S|$ is odd. We can write $S=\{\pm 1, \pm 2,\ldots, \pm r \}$, hence, $r \equiv_{n} -r$. This implies that the power cycle is complete, against the assumptions. So, $J_{G,m}$ is not unmixed. 
\end{proof}

The next result characterizes all the unmixed ideals $J_{G,m}$, where $G$ is a non--complete power cycle $C_n(1,\ldots,r)$. We also show some examples to clarify the techniques used in the proof of the theorem.  

\begin{theorem}\label{thm:main}
Let $G= C_n(1,\ldots,r)$ be a non--complete power cycle.
Then $J_{G,m}$ is unmixed if and only if $m$ is odd, $n\in \{m+1,\ldots, \frac{3m+1}{2}\}$ and $r=\frac{m-1}{2}$.  
\end{theorem}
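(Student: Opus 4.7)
The plan is to combine the preceding results with a structural classification of the cutsets of a power cycle. For the \emph{only if} direction, Lemma~\ref{lem:evenpower} forces $m$ to be odd, and Corollary~\ref{cor:power1} combined with $S=\{\pm 1,\ldots,\pm r\}$ yields $|S|=2r=m-1$, hence $r=(m-1)/2$. Non-completeness of $C_n(1,\ldots,r)$ requires $r<\lfloor n/2\rfloor$, which, together with $m$ odd, forces $n\ge m+1$.

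The crux is the upper bound on $n$, which I would address by classifying the cutsets of $G=C_n(1,\ldots,r)$. Given $T\subseteq V(G)$, decompose it cyclically into maximal arcs (call them \emph{blocks}) $b_1,\ldots,b_k$ of sizes $s_1,\ldots,s_k$, separated by \emph{gaps}. A short distance calculation in $C_n$ shows that the two gaps adjacent to block $b_i$ are connected in $G\setminus T$ if and only if $s_i\le r-1$ (the minimum circular distance between them is exactly $s_i+1$). Consequently $c(T)$ equals the number of \emph{big} blocks (those of size $\ge r$) whenever this number is positive. I would then prove that every non-empty cutset consists of $B\ge 2$ blocks of size exactly $r$. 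Indeed, vertices in small blocks are never essential (removal keeps the block small and preserves the set of big blocks), and boundary vertices of blocks of size $\ge r+1$ are also non-essential (removal leaves a block of size $\ge r$, still big). Conversely, removing any vertex from a size-$r$ block either shrinks it to size $r-1$ or splits it into two pieces each of size $<r$; in either case one big block is destroyed and $c$ strictly drops. Thus the cutsets are precisely the configurations of $B\ge 2$ disjoint length-$r$ arcs separated by nonempty gaps, satisfying $|T|=Br$ and $c(T)=B$.

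With this description in hand, Lemma~\ref{lem:unmixed} demands $B=Br/(m-1)+1=B/2+1$, forcing $B=2$. Hence $J_{G,m}$ is unmixed if and only if no cutset with $B\ge 3$ exists. Such a cutset requires room for three length-$r$ arcs and three nonempty gaps, that is, $n\ge 3r+3=(3m+3)/2$. Therefore unmixedness is equivalent to $n\le 3r+2=(3m+1)/2$, yielding the claimed interval $m+1\le n\le (3m+1)/2$. The technical heart of the argument is the structural step: characterizing $c(T)$ through big blocks and verifying that only blocks of size exactly $r$ can appear in a cutset; once this is settled the numerics follow immediately.
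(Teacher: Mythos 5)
Your proof is correct, and it takes a genuinely different route from the paper's. The two arguments share the same opening moves (Lemma~\ref{lem:evenpower} gives $m$ odd, Corollary~\ref{cor:power1} gives $|S|=2r=m-1$, non--completeness gives $n\geq m+1$), but they diverge at the core. The paper never classifies cutsets of power cycles in the abstract: for the only--if direction it exhibits a single explicit cutset of cardinality $3r$ (three length-$r$ arcs separated by singleton gaps, Claim~1) whenever $n>\frac{3m+1}{2}$, and for the converse it enumerates by hand \emph{all} cutsets $T_{j,k}$ of $C_{m+i}(1,\ldots,r)$ as unions of two length-$r$ arcs, verifies completeness of the list, and counts them ($|\mathcal{C}(G)|=\frac{ni}{2}$). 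Your block/gap lemma --- $c(T)$ equals the number $B$ of blocks of size $\geq r$ when $B\geq 1$, and the nonempty cutsets are exactly the configurations of $B\geq 2$ arcs of size exactly $r$ --- subsumes both halves of the paper's case analysis in one structural statement valid for every $n$, after which each direction is a line of arithmetic via Lemma~\ref{lem:unmixed}: $B=\frac{Br}{m-1}+1=\frac{B}{2}+1$ forces $B=2$, and a $B=3$ cutset needs $n\geq 3r+3=\frac{3m+3}{2}$. What the paper's route buys is the explicit inventory and count of cutsets (of independent interest and used in Example~\ref{ex:pcunm}); what yours buys is brevity, uniformity over both directions, and a reusable description of $\mathcal{C}\left(C_n(1,\ldots,r)\right)$. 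One point you should make airtight in the structural step: your parenthetical distance computation ($s_i+1>r$ across a big block) is not by itself sufficient to separate the two adjacent gaps, because the circular distance $|j-i|_n$ is the minimum over \emph{both} arcs, and two vertices could in principle be adjacent the other way around. You need the extra observation that when $B\geq 2$ every arc joining vertices of distinct putative components crosses some big block entirely, hence has length at least $r+1$, so no such edge exists; while when $B\leq 1$ the complement $G\setminus T$ is connected anyway (all gaps merge across small blocks), so $c(T)=\max(B,1)$ holds in all cases. With that observation inserted, your classification --- and hence the whole proof --- is complete and correct.
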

\begin{proof} Let $J_{G,m}$ be unmixed. Then, $m$ is odd (see Lemma~\ref{lem:evenpower}), $|S|=$ $|\{\pm 1, \pm 2,\ldots, \pm r \}|$ $=m-1$ (see Corollary~\ref{cor:power1}) and consequently $r=\frac{m-1}{2}$. Moreover, we observe that $S\cup \{0\}\subseteq \ZZ_n$, and so $m=|S|+1\leq n$. If we consider non--complete power cycles, then $n\geq m+1$. On the other hand, if $n>\frac{3m+1}{2}$ then it is possible to find at least a cutset $T$ such that $|T|\not \equiv_{m-1} 0$. In particular, $T$ has cardinality $3r=3\frac{m-1}{2}\not \equiv_{m-1} 0$.\\
\textsc{Claim 1.} If $n>\frac{3m+1}{2}$, then a cutset for $G$ is 
\begin{align}\label{eq:cutsetNoU}
T= & \{1,\ldots,r\} \cup \{r+2,\ldots,2r+1\} \cup \{2r+3,\ldots,3r+2\} \\
= & \bigcup_{k=0}^{2}{\left\{kr+(k+1),\ldots,(k+1)r+k\right\}}\nonumber.
\end{align}
\textsc{Proof of Claim 1.} We observe that in $G=C_n(1,\ldots,r)$, it is
\[
N(r+1)=\{1,\ldots,r\} \cup \{r+2,\ldots,2r+1\}.
\]
Indeed, the adjacencies of $r+1$ consist exactly of the \emph{previous} $r$ and the \emph{successive} $r$ vertices of $G$. In such a case, we identify the \emph{successive} of a vertex $v$ with $v+1$, and its \emph{previous} with $v-1$ in $\ZZ_n$. Analogously,
\[
N(2r+2)=\{r+2,\ldots,2r+1\} \cup \{2r+3,\ldots,3r+2\}.
\]
We note that the assumption of the claim implies $3r+2=3\frac{m-1}{2}+2=\frac{3m+1}{2}<n$. Hence, $G\setminus T$ has $3$ connected components. In fact, $r+1$ and $r+2$ are isolated vertices and there is a connected component containing $0$. Furthermore, $T$ is a cutset. We show that 
if $T'=T\setminus \{v\}$, then $G\setminus T'$ has $2$ connected components. Indeed, if $v\in \{1,\ldots,r\}$
there exists a path from $0$ to $r+1$. Hence, $r+1$ is not isolated.
Analogously, we obtain the same situation in the other cases. 
We note that if $n=\frac{3m+1}{2}$, then $T$ is not a cutset. In fact, $3r+2=\frac{3m+1}{2}\equiv_{n} 0$ and in such a case $c\left(T\right)=c\left(T\setminus\{0\}\right)$ (see Example~\ref{ex:pcnotunm} for more details).\\
Hence, for $n\not \in \{m+1,\ldots, \frac{3m+1}{2}\}$, there do not exist non--complete unmixed power cycles, against the assumption. So, we have $n \in \{m+1,\ldots, \frac{3m+1}{2}\}$.\\

Conversely, suppose $m$ odd, $n\in \{m+1,\ldots, \frac{3m+1}{2}\}$ and $r=\frac{m-1}{2}$. We have to prove that the ideal $J_{G,m}$, $G=C_n(1,\ldots,r)$, is unmixed. This means that if we fix $m$ odd, then there exist exactly $r+1$ non--complete unmixed power cycles.  
Given a power cycle as before, we will show that all its cutsets  $T$ share the same cardinality, that is, $2r=2\frac{m-1}{2}=m-1$. If it holds true, then $\frac{|T|}{m-1}+1=2$ for all cutsets $T$ and the ideal $J_{G,m}$ is unmixed (Lemma~\ref{lem:unmixed}).
 
By the hypotheses about $n$, we can write $n=m+i$, for $i=1,\ldots,r+1$. We will prove
that the number of cutsets of $C_{m+i}(1,\ldots,r)$ is $\frac{(m+i)i}{2}=\frac{ni}{2}$, for $i=1,\ldots,r+1$. We observe that, for the oddity of $m$, one among $i$ and $n$ must be even.
We describe these cutsets.\\
\textsc{Claim 2.} We prove that if $G=C_{m+i}(1,\ldots,r)$, then $\mathcal{C}(G)=\{T_{j,k}\}\cup\{\emptyset\}$, where 
\[
T_{j,k}=\{j,\ldots,r+j-1\} \cup \{r+j+k,\ldots,r+j+k+r-1\},
\]
for $j=1,\ldots,r+i$ and $k=1,\ldots,q$, with
\[
q=\begin{cases}
i & \text{if } j\leq r+1, \\
i-j+r+1 & \text{otherwise}.
\end{cases}
\]
All the elements of the sets $T_{j,k}$ are up congruence module $n$. Moreover, we call two vertices $v,w$ \emph{consecutive} if  either $v=w+1$ or $v=w-1$ in $\ZZ_n$.\\
\textsc{Proof of Claim 2.}
Firstly, we observe that $S\in\mathcal{C}(G)$ (Lemma~\ref{lem:powercut}).
Furthermore, we note that $2r+i=m-1+i=n-1$ is not a \emph{consecutive} vertex of $1=j$. So, we can write $S=T_{1,i}=\{1,\ldots,r\} \cup \{r+1+i,\ldots,2r+i\} = \{1,\ldots,r\} \cup \{n-r,\ldots,n-1\}$.\\
Now, we consider $j=1$ and we describe %show 
the set of vertices $T_{1,k}$ for $k=1,\ldots,i$:
\[
\begin{array}{ccl}
\{1,\ldots ,r\} & \cup & \{r+2,r+3,\ldots,2r+1\},\\
\{1,\ldots ,r\} & \cup & \{r+3,r+4,\ldots,2r+2\},\\
\vdots & \vdots & \phantom{\{r+3,r+4,} \vdots\\
\{1,\ldots ,r\} & \cup  & \{r+i+1,r+i+2,\ldots,2r+i\}.
\end{array}
\]
If we consider $j=2$, we can write the sets $T_{2,k}$ for $k=1,\ldots,i$:
\[
\begin{array}{ccl}
\{2,\ldots ,r+1\} & \cup & \{r+3,r+4,\ldots,2r+2\},\\
\vdots & \vdots & \phantom{\{r+3,r+4,} \vdots\\
\{2,\ldots ,r+1\} & \cup & \{r+i+1,r+i+2,\ldots,2r+i\},\\
\{2,\ldots ,r+1\} & \cup & \{r+i+2,r+i+3,\ldots,2r+i+1\}.
\end{array}
\]
We note that $2r+i+1=m-1+i+1=n=0$ is not a \emph{consecutive} vertex of $2=j$.\\
We can iterate this construction until we reach the value $j=r+1$. So, we construct the sets $T_{r+1,k}$:
\[
\begin{array}{ccl}
\{r+1,\ldots ,2r\} & \cup & \{2r+2,2r+3,\ldots,3r+1\},\\
\vdots & \vdots & \phantom{\{2r+2,2r+3,} \vdots\\
\{r+1,\ldots ,2r\} & \cup & \{2r+i+1,2r+i+2,\ldots,3r+i\}.
\end{array}
\]
Also in this case, we have that $3r+i=r+m-1+i=r-1$ is not a \emph{consecutive} vertex of $r+1=j$.\\
Furthermore, we observe that all the sets of vertices $T_{j,k}$, with $j=1,\ldots,r+1$ and $k=1,\ldots,i$, are distinct (see Example~\ref{ex:pcunm}).

Now, we show that $T_{j,k}\in\mathcal{C}(G)$. Firstly, we observe that if $k=1$ then $T_{j,k}=N(r+j)$, and if $k=i$ then $T_{j,k}=N(j-1)$. In both these cases, $T_{j,k}$ is a cutset (Lemma~\ref{lem:powercut}) for each value of $j$. More in detail, $G\setminus T_{j,k}$ has two connected components that are complete subgraphs of $G$: an isolated vertex ($r+j$ or $j-1$) and the complete graph $K_i$ ($i=m+i-2r-1$).
Indeed, from $n\leq\frac{3m+1}{2}=3r+2$ we have $i=m+i-2r-1\leq 3r+2-2r-1=r+1$. This implies that each vertex of $K_i$ is adjacent to all the remaining $r$ vertices.  

Fixing $j=1$ and $k=2$, we observe that $G\setminus T_{1,2}$ has two connected components that, using similar arguments as before, are two complete subgraphs of $G$: $K_2$ (with vertices $\{r+1,r+2\}$) and $K_{i-1}$ (with the vertices of $G$ not in $T_{1,2}\cup \{r+1,r+2\}$). Now, we consider $T'=T_{1,2}\setminus \{v\}$. If $v\in \{1,\ldots,r\}$ then
there exists a path from $0$ to $r+1$, and so $G\setminus T'$ is connected. If $v\in \{r+3,\ldots,2r+2\}$ then there exists a path from $r+2$ to $2r+3$, and $G\setminus T'$ is connected as before. Hence, $T_{1,2}\in \mathcal{C}(G)$. This holds true for each $1<k<i$, applying an analogous reasoning.

In general, for symmetry, this result holds for all $j=1,\ldots, r+i$ and for all $k=1,\ldots, i$. We have distinguished  
the cases $j\leq r+1$ 
and $j\geq r+2$ in order to write all the cutsets without repetition. Indeed, when $j=r+2$ and $k=i$ we obtain the cutset
\begin{align*}
T_{r+2,i}= & \{r+2, r+3, \ldots, 2r+1\} \cup \{2r+2+i,\ldots,3r+1+i\}\\
= & \{r+2, r+3, \ldots, 2r+1\} \cup \{1,\ldots,r\}\\
= & T_{1,1}
\end{align*}
($2r+2+i=m+1+i=1$ and $3r+1+i=r+m+i=r$). In such a case, it is sufficient for $k$ to assume values between $1$ and $i-1$.\\
When $j=r+i$, we have only a new cutset, $T_{r+i,1}$, since all the others have already been considered, that is, the maximum value for $k$ can be $1$. Finally, when $j=r+i+1$ no new cutset $T_{j,k}$ can be found.

For what has been said so far, we are sure that the cutsets $T_{j,k}$ have the minimal cardinality among all possible cutsets. Furthermore, there no exist cutsets of cardinality greater than $2r$. Indeed, we have already observed that $G\setminus T_{j,k}$ has always two connected components that are complete subgraphs of $G$. Since a complete subgraph has the empty cutsets only, then the number of connected components of $G\setminus T'$, with $T'=T_{j,k}\cup\{v\}$, is the same as that of $G\setminus T_{j,k}$. Hence, $\mathcal{C}(G)=\left\{ T_{j,k} \right\}$ for $j=1,\ldots,r+i$ and $k=1,\ldots,q$, as before defined. 

Finally, we count the number of cutsets of $G=C_{m+i}(1,\ldots,r)$. To do this, we need to distinguish two cases. When $j\leq r+1$, the index $k$ has a constant range, from $1$ to $i$. So, we have $i(r+1)$ cutsets. When $j \geq r+2$, the index $k$ can assume values from $1$ to $i-j+r+1$. For the last value of $j=r+i$, we have $k=1$. This means that the number of the cutsets is the sum of the first $i-1$ positive integer ($r+i-r-2+1=i-1$), that is, $\frac{i(i-1)}{2}$.
Hence, we have that
\[
|\mathcal{C}(G)|=i(r+1)+\frac{i(i-1)}{2}=i\left(\frac{m+1+i-1}{2}\right)=\frac{ni}{2}.
\]
\end{proof}

Now, we give some examples in order to clarify the techniques used in the proof of Theorem~\ref{thm:main}. They are tested also using \emph{Macaulay2} \cite{Ma}.

\begin{example}\label{ex:pcnotunm}
Let $G=C_{12}(1,2,3)$ and $m=7$. In such a case, we have $r=3$ and $i=5$. From Theorem~\ref{thm:main}, $J_{G,m}$ is not unmixed: $12>\frac{3m+1}{2}=11$. Indeed, by Formula~(\ref{eq:cutsetNoU}), we can find a cutset $T$ such that $|T|=9\not \equiv_6 0$:
\[
T=\{1,2,3\} \cup \{5,6,7\} \cup \{9,10,11\}.
\]

\begin{center}
\begin{tikzpicture} [rotate=90,scale=0.9, vertices/.style={draw, fill=black, circle, inner sep=1.0pt}]
\def \n {12}
\def \radius {1.5cm}
\pgfmathtruncatemacro\nv{\n-1}
\def \S {1,2,3}
\node (G) at ({60}:\radius+1cm) {$G$};
%Vertices
\foreach \v in {0,...,\nv}{
   \node[vertices] (\v) at ({-360/\n * \v}:\radius) {};
   \node at ({-360/\n * \v}:\radius+.3cm) {${\v}$};
}
%Edges
\foreach \v in {0,...,\nv}{
   \foreach \s in \S{
      \pgfmathtruncatemacro\m{mod(\v+\s,\n)}
      \path (\v) edge node[pos=0.5,below] {} (\m);
   }
}
\end{tikzpicture}
\hspace{1cm}
\begin{tikzpicture} [rotate=90,scale=0.9,vertices/.style={draw, fill=black, circle, inner sep=1.0pt}]
\def \n {12}
\def \radius {1.5cm}
\pgfmathtruncatemacro\nv{\n-1}
\def \S {1,2,3}
\node (GT) at ({60}:\radius+1cm) {$G\setminus T$};
%Vertices
\foreach \v in {0,4,8}{
   \node[vertices] (\v) at ({-360/\n * \v}:\radius) {};
   \node at ({-360/\n * \v}:\radius+.3cm) {${\v}$};
}
\node (P) at ({180}:\radius+.3cm) {\phantom{6}};
\end{tikzpicture}
\end{center}
$G\setminus T$ has $3$ connected components, and, from the characterization in Lemma~\ref{lem:unmixed}, $J_{G,m}$ is not unmixed.
\end{example}

\begin{example}\label{ex:pcunm}
Let $G=C_8(1,2)$ and $m=5$. In such a case, we have $r=2$ and $i=3$. Theorem~\ref{thm:main} assures that $J_{G,m}$ is unmixed. Firstly, we observe that $S=\{1,2,6,7\}$. 

\begin{center}
\begin{tikzpicture} [rotate=90,scale=0.8, vertices/.style={draw, fill=black, circle, inner sep=1.0pt}]
\def \n {8}
\def \radius {1.5cm}
\pgfmathtruncatemacro\nv{\n-1}
\def \S {1,2}
\node (G) at ({60}:\radius+1cm) {$G$};
%Vertices
\foreach \v in {0,...,\nv}{
   \node[vertices] (\v) at ({-360/\n * \v}:\radius) {};
   \node at ({-360/\n * \v}:\radius+.3cm) {${\v}$};
}
%Edges
\foreach \v in {0,...,\nv}{
   \foreach \s in \S{
      \pgfmathtruncatemacro\m{mod(\v+\s,\n)}
      \path (\v) edge node[pos=0.5,below] {} (\m);
   }
}
\end{tikzpicture}
\hspace{1cm}
\begin{tikzpicture} [rotate=90,scale=0.8,vertices/.style={draw, fill=black, circle, inner sep=1.0pt}]
\def \n {8}
\def \radius {1.5cm}
\pgfmathtruncatemacro\nv{\n-1}
\def \S {1,2}
\node (GS) at ({60}:\radius+1cm) {$G\setminus S$};
%Vertices
\foreach \v in {0,3,4,5}{
   \node[vertices] (\v) at ({-360/\n * \v}:\radius) {};
   \node at ({-360/\n * \v}:\radius+.3cm) {${\v}$};
}
%Edges
\foreach \v in {0,...,1}{
   \foreach \s in \S{
      \pgfmathtruncatemacro\m{mod(\v+\s,3)+3}
      \pgfmathtruncatemacro\k{\v+3}
      \path (\k) edge node[pos=0.5,below] {} (\m);
   }
}
\end{tikzpicture}
\end{center}

All the cutsets of $G$ are the $T_{j,k}$ for $j=1,\ldots,6$ and $k=1,\ldots,3$ as follows:
\[
\begin{array}{ccc}
\begin{array}{c}
%first row
T_{1,1}=\{1,2\} \cup \{4,5\},\\
T_{1,2}=\{1,2\} \cup \{5,6\},\\
T_{1,3}=\{1,2\} \cup \{6,7\};
\end{array}
&
\begin{array}{c}
T_{2,1}=\{2,3\} \cup \{5,6\},\\
T_{2,2}=\{2,3\} \cup \{6,7\},\\
T_{2,3}=\{2,3\} \cup \{7,0\};
\end{array}
&
\begin{array}{c}
T_{3,1}=\{3,4\} \cup \{6,7\},\\
T_{3,2}=\{3,4\} \cup \{7,0\},\\
T_{3,3}=\{3,4\} \cup \{0,1\};
\end{array}\\
& & \\
%second row
\begin{array}{c}
T_{4,1}=\{4,5\} \cup \{7,0\},\\
T_{4,2}=\{4,5\} \cup \{0,1\},\\
T_{4,3}=\cancel{\{4,5\} \cup \{1,2\}};
\end{array}
&
\begin{array}{c}
T_{5,1}=\{5,6\} \cup \{0,1\},\\
T_{5,2}=\cancel{\{5,6\} \cup \{1,2\}},\\
T_{5,3}=\cancel{\{5,6\} \cup \{2,3\}};
\end{array}
&
\begin{array}{c}
T_{6,1}=\cancel{\{6,7\} \cup \{1,2\}},\\
T_{6,2}=\cancel{\{6,7\} \cup \{2,3\}},\\
T_{6,3}=\cancel{\{6,7\} \cup \{3,4\}}.
\end{array}
\end{array}
\]
We can note that $T_{6,k}$, for $k=1,\ldots,3$, does not add new cutsets of $G$. Moreover, for $j>3$, the repetition of the cutsets \emph{gradually} begins (see the proof of Theorem~\ref{thm:main}). Furthermore, the cardinality of each cutset is $4$. 
Hence, we have that $|\mathcal{C}(G)|=\frac{in}{2}=12$.
\end{example}

Modifying the algorithm implemented in
\cite{LMRR}  for the unmixedness of binomial edge ideals we computed all the unmixed $J_{G,m}$ with $n\leq 10$ vertices.  The implementation and results are downloadable from \cite{ACR}.\\
In the following table we show the cardinalities of the sets of unmixed $J_{G,m}$ on $n$ vertices.
\begin{table}[H]
\begin{center}
\begin{tabular}{p{6pt}|r|rrrrrrr|}
 \multicolumn{9}{c}{\phantom{0000000}(numbers of vertices $n$)} \\
\cline{3-9}
\multicolumn{2}{c|}{} & 4 &  5 & 6 & 7 & 8 & 9 & 10 \\
\cline{2-9}
\multirow{10}{*}{\rot{\phantom{0000}(values of $m$)}} 
& 3  & \bf 3 & 5 & 8 & 20 & 61 & 153 & 496 \\
& 4  & 1 & 3 & 8 & 12 & 20 & 52  & 141 \\
& 5  & 1 & 1 & 4 & 11 & 33 & 44  & 73 \\
& 6  & 1 & 1 & 1 & 4  & 16 & 51  & 111 \\
& 7  & 1 & 1 & 1 & 1  & 5  & 23  & 126 \\
& 8  & 1 & 1 & 1 & 1  & 1  & 5   & 33 \\
& 9  & 1 & 1 & 1 & 1  & 1  & 1   & 6 \\
& 10 & 1 & 1 & 1 & 1  & 1  & 1   & 1 \\
\cline{2-9}
\end{tabular}
\caption{\label{tab:un}Cardinalities of the sets of unmixed $J_{G,m}$ on $n$ vertices.}
\end{center}
\end{table}

The ideals $J_{G,m}$ counted by the highlighted entry $(3,4)$ of the Table~\ref{tab:un} are exactly the ones related to the graphs in Example~\ref{ex:nfour}.

\section{Cohen--Macaulayness}\label{sec:4}

Let $J$ be an ideal in a polynomial ring $S = K[x_1,\ldots,x_n]$ and let $P_1 ,\ldots,P_r$ be the minimal prime ideals of $J$. The dual graph $D(J)$ of $J$ is the graph with vertex set $V\left(D(J)\right)=\{1,\ldots,r\}$ and edge set
\[
E\left(D(J)\right)=\left\{\{i,j\}:\height(P_i + P_j )-1=\height(P_i)=\height(P_j)=\height(J)\right\}.
\] 

\begin{lemma}\label{lem:dualempty}
 Let $J_{G,m}$ be a generalized binomial edge ideal with $m\geq 2$ such that  $G$ is a biconnected graph on $n$ vertices. Then $P_\emptyset$ is an isolated vertex in $D(J)$.
\end{lemma}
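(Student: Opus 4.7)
The plan is to show that for every non-empty cutset $T\in\CC(G)$ the putative edge $\{P_\emptyset, P_T\}$ fails the height condition defining $D(J_{G,m})$; since those are the only possible neighbors of $P_\emptyset$, this will make $P_\emptyset$ isolated. The strategy is to compute $\height(P_\emptyset + P_T)$ exactly and compare it with $\height(P_\emptyset)+1$.

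I would first identify $P_\emptyset$ with the classical $2$-minor determinantal ideal $I_2(X)=J_{K_n,m}$, which is prime of height $(m-1)(n-1)$ and, by (\ref{Eq:primarydec}), is a minimal prime of $J_{G,m}$ (no $P_{T'}$ with $T'\ne\emptyset$ can be contained in $P_\emptyset$ since the latter contains no linear forms). Next, because every generator of the ideals $J_{\tilde G_k, m}$ appearing in $P_T$ is a $2$-minor of $X$, it already lies in $P_\emptyset$; hence
\[
P_\emptyset + P_T \;=\; P_\emptyset + \mathfrak{m}_T,\qquad \mathfrak{m}_T := (x_{i,v} : v\in T,\; 1\le i\le m).
\]
Quotienting $R$ by $\mathfrak{m}_T$ collapses $X$ to the $m\times (n-|T|)$ submatrix $X'$ on the columns indexed by $[n]\setminus T$, and reduces $P_\emptyset$ to $I_2(X')$. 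Since $T$ is a cutset we have $c(T)\ge 2$, so $n-|T|\ge 2$, and the classical formula $\dim K[Y]/I_2(Y)=m+n'-1$ for an $m\times n'$ generic matrix (with $n'\ge 2$) applies, giving
\[
\height(P_\emptyset + P_T) \;=\; m|T| + (m-1)(n-|T|-1) \;=\; (m-1)(n-1) + |T|.
\]

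For the edge $\{P_\emptyset, P_T\}$ to belong to $D(J_{G,m})$ we would need $\height(P_\emptyset+P_T)-1=\height(P_\emptyset)=(m-1)(n-1)$, which by the above forces $|T|=1$. But $G$ is biconnected, so no single vertex of $G$ is a cutpoint and no singleton lies in $\CC(G)$; hence $|T|\ge 2$ for every non-empty cutset, and no such edge can exist. The only real obstacles are the two standard ingredients in the height computation --- the inclusion $J_{\tilde G_k, m}\subseteq P_\emptyset$ and the classical dimension formula for rank-$1$ determinantal rings --- and once those are in hand the combinatorial punchline is immediate from the definition of $2$-connectedness.
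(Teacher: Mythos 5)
Your proposal is correct and follows essentially the same route as the paper: both compute $\height(P_\emptyset + P_T) = m|T| + (m-1)(n-|T|-1) = (m-1)(n-1) + |T|$ by absorbing the $2$-minor generators of $P_T$ into $P_\emptyset = I_2(X)$ and reducing to a determinantal ideal on the smaller matrix $X'$, then use biconnectedness to rule out $|T|=1$. Your version spells out two steps the paper leaves implicit (the inclusion $J_{\tilde G_k,m}\subseteq P_\emptyset$ and the reduction modulo the linear forms $\mathfrak{m}_T$), but the argument is the same.
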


\begin{proof}
If $G$ is complete then $D(J)$ is the graph with only one vertex, that is, $P_\emptyset$. Hence, we assume $G$ is not complete. For any non--empty cutset $T$ we define $Q=P_T+ P_\emptyset$. We will show that the edge $\{P_\emptyset,P_T\}$ is not an edge of $D(G)$.
We recall that $P_\emptyset=I_2(X)$, namely the ideal of $2$-minors of the matrix $m\times n$, and its height is $(m-1)(n-1)$. 

Let $|T|=t$. Then after relabelling the vertices, we have that 
\[
 Q=\sum_{i=1}^m (x_{i,1},\ldots, x_{i,t}) + \sum_{\substack{1\leq i_1<i_2\leq m\\t+1\leq j_1<j_2\leq n}}(x_{i_1,j_1}x_{i_2,j_2}-x_{i_1,j_2}x_{i_2,j_1}).
\]

Hence $Q$ is prime and its height is $tm+(m-1)(n-t-1)$. The first addend comes from the monomial generators of $Q$ and the second one comes from by the binomial generators in $Q$. Moreover, the binomial generators of $Q$ determine the ideal $I_2(X')$, that is, the ideal of $2$-minors of the $m\times(n-t)$ matrix.
Therefore
\[
 \height Q-1= tm+(m-1)(n-t-1)-1=(m-1)(n-1)+t-1 > (m-1)(n-1),
\]
when $t>1$, namely when $G$ is biconnected. The assertion follows.
\end{proof}
The next result has been obtained in \cite{BN} using a different type of argument.
\begin{corollary}
Let $J_{G}$ be a binomial edge ideal such that  $G$ is biconnected. Then $J_G$ is Cohen--Macaulay if and only if $G$ is the complete graph. 
\end{corollary}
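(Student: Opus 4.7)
The plan is to derive the corollary by combining the classical determinantal Cohen--Macaulayness result (for the easy direction) with Lemma~\ref{lem:dualempty} and Hartshorne's theorem on connectedness of the dual graph (for the hard direction).

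For the direction ``$G=K_n \Rightarrow J_G$ is Cohen--Macaulay'', I would observe that when $G$ is the complete graph on $[n]$, every non--empty subset $T\subseteq V(G)$ fails to be a cutset (removing a vertex of $T$ does not change the number of components, which stays $1$), so $\mathcal{C}(G)=\{\emptyset\}$ and thus $J_G=P_\emptyset=I_2(X)$. This is the ideal of $2$--minors of the generic $2\times n$ matrix, which is well--known to be Cohen--Macaulay by the classical theory of determinantal ideals.

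For the non--trivial direction, assume $J_G$ is Cohen--Macaulay. By \cite[Proposition~1.1]{RHA} (or equivalently the $S_2$ condition), the dual graph $D(J_G)$ is connected. Since $G$ is biconnected, Lemma~\ref{lem:dualempty} (applied with $m=2$) says that $P_\emptyset$ is an isolated vertex of $D(J_G)$. A connected graph with an isolated vertex has only that vertex, so $P_\emptyset$ is the unique minimal prime of $J_G$. Because Cohen--Macaulayness implies unmixedness, all minimal primes of $J_G$ must already appear in the decomposition~(\ref{Eq:primarydec}) with the same height, so the intersection collapses to $J_G=P_\emptyset$.

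It remains to argue that $J_G = P_\emptyset$ forces $G$ to be complete, which is where I expect the only small wrinkle to arise. Suppose toward a contradiction that $G$ is not complete; pick a non--edge $\{i,j\}$ and let $T$ be a minimum vertex separator between $i$ and $j$ in $G$ (which exists and is non--empty since $\{i,j\}\notin E(G)$, and has size at least $2$ by biconnectedness). Then $T\in\mathcal{C}(G)$ with $T\neq\emptyset$, and by the standard theory of binomial edge ideals \cite{HHHKR,MO} the prime $P_T$ is a minimal prime of $J_G$ distinct from $P_\emptyset$ (its height equals that of $P_\emptyset$ precisely because of unmixedness, which we already have). This contradicts the uniqueness of the minimal prime established in the previous paragraph, so $G$ must be complete. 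The main obstacle is really just verifying this last minimality claim cleanly; everything else follows mechanically from Lemma~\ref{lem:dualempty} together with Hartshorne's connectedness theorem.
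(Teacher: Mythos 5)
Your proof is correct and follows essentially the same route the paper intends: the easy direction via the classical Cohen--Macaulayness of determinantal ideals, and the hard direction by combining Hartshorne's connectedness theorem with Lemma~\ref{lem:dualempty} (which applies for $m=2$ since biconnectedness rules out cutsets of size one) to conclude that $P_\emptyset$ is the unique minimal prime, forcing $\mathcal{C}(G)=\{\emptyset\}$ and $G$ complete. The only cosmetic remark is that the minimality of $P_T$ for a non-empty cutset $T$ follows directly from the cutset characterization of minimal primes in \cite{HHHKR}, with no appeal to unmixedness needed, so your ``small wrinkle'' is not actually a wrinkle.
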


\begin{corollary}\label{cor:main}
Let $J_{G,m}$ be a generalized binomial edge ideal with $m\geq 3$. Then $J_{G,m}$ is Cohen--Macaulay if and only if $G$ is the complete graph. 
\end{corollary}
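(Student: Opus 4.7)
The plan is to combine the two halves of the machinery already developed: the classical determinantal result on the complete-graph case, and the dual-graph obstruction from Lemma \ref{lem:dualempty}. For the ``if'' direction, observe that when $G=K_n$, the generalized binomial edge ideal $J_{K_n,m}$ is precisely $I_2(X) = P_\emptyset$, the ideal of $2$-minors of the generic $m\times n$ matrix, which is a classical Cohen--Macaulay prime (Eagon--Northcott / Hochster--Eagon). So nothing more needs to be done in that direction.

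For the ``only if'' direction, I would assume $J_{G,m}$ is Cohen--Macaulay with $m\geq 3$ and derive a contradiction from the assumption that $G$ is non-complete. Cohen--Macaulayness implies unmixedness, so by Corollary \ref{cor:biconnected}(1), $G$ is $(m-1)$-connected, and in particular biconnected (since $m\geq 3$). Next, invoke Hartshorne's criterion recalled in the introduction: if $R/J_{G,m}$ is Cohen--Macaulay then it satisfies Serre's condition $(S_2)$, and hence the dual graph $D(J_{G,m})$ is connected. If $G$ is not complete, pick a minimal separator $T$ between any pair of non-adjacent vertices of $G$; this yields a non-empty cutset and hence, via the primary decomposition (\ref{Eq:primarydec}), a second minimal prime $P_T$ in addition to $P_\emptyset$ (unmixedness guarantees that all $P_T$ appearing in the intersection have the same height and are therefore minimal primes). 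Now apply Lemma \ref{lem:dualempty}: $P_\emptyset$ is an isolated vertex of $D(J_{G,m})$. Since $D(J_{G,m})$ contains at least two vertices, this contradicts its connectedness.

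The argument is essentially a short combination of prior results and so no step is genuinely difficult; the only point that requires a small remark is the existence of a non-empty cutset whenever $G$ is non-complete and biconnected. This is immediate from the standard graph-theoretic fact that any two non-adjacent vertices in a connected graph are separated by some non-empty vertex cut, which is then refined to a cutset in the sense of the paper. With this in hand the contradiction closes, forcing $G = K_n$.
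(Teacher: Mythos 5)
Your proof is correct and takes essentially the same route as the paper: the paper's (two-line) proof likewise deduces biconnectedness from Corollary~\ref{cor:biconnected} and then concludes via Lemma~\ref{lem:dualempty}, with Hartshorne's criterion (Cohen--Macaulay $\Rightarrow$ $(S_2)$ $\Rightarrow$ connected dual graph) recalled in the introduction supplying the contradiction. You merely spell out details the paper leaves implicit, namely the Eagon--Northcott/Hochster--Eagon result for the complete-graph direction and the existence of a second minimal prime $P_T$ coming from a non-empty cutset when $G$ is non-complete.
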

\begin{proof}
By Corollary \ref{cor:biconnected}, $G$ is biconnected. Thus the assertion follows by Lemma \ref{lem:dualempty}. 
\end{proof}

\bibliographystyle{abbrv}
\bibliography{ACR}

\end{document}